\DeclareRobustCommand{\VAN}[3]{#2} % set up for citation
\DeclareRobustCommand{\DELA}[3]{#2} % set up for citation
\newtheorem{definition}{Definition}
\newtheorem{theorem}{Theorem}
\newtheorem{corollary}{Corollary}
\begin{document}

\title{$N\times N$ matrix time--band limiting examples}

\author{Bruno Eijsvoogel}
\date{}

\maketitle

\begin{abstract}%
Time-- and band--limiting in the context of orthogonal
polynomials has been studied since the 1980's. It involves
finding differential or difference operators with special
commutative properties.
More recently this topic has been generalized to the cases
were the orthogonal polynomials are matrix-valued. Leading
to differential or difference operators with matrix coefficients.
So far the only explicit examples of such operators were $2\times 2$
matrices. In this paper we give a number of $N\times N$ examples and
a counterexample to illustrate the role that strong Pearson equations
can play in finding such examples.
\end{abstract}

\section{Introduction}

In this paper we examine a number of $N\times N$ matrix-valued orthogonal
polynomials (MVOP) which are examples that fit into the noncommutative 
time--band limiting framework introduced
in \cite{GPZ}. So in this introduction we will first mention some of 
the work that led up to \cite{GPZ}.

\subsection{Time--Band limiting}
In the 1960's Slepian, Landau and Pollack published a series of seminal
papers \cite{Pswf1, Pswf2, Pswf3, Pswf4} studying time--limiting and
band--limiting in the context of the Fourier expansion.
The results they were able to derive depended
strongly on a seemingly miraculous commutation between a certain 
integral operator
\begin{equation}\label{eq:integral_op}
(K \cdot \phi)(x)
=
\int_{-1}^{1}\frac{\sin(c(x-y))}{\pi(x-y)} \phi(y)dy
\end{equation} 
and a differential operator,
$$
(D \cdot \phi)(x) = (1-x^2)\phi''(x) -2x\phi'(x) -c^2 x^2\phi(x)
$$
and in particular on their sharing of eigenfunctions. 

This miraculous commutation seemed to be more than just a lucky fluke
since various generalizations of this situation resulted in useful
commuting differential operators. We recommend \cite{slepianhelp} for 
a very nice introduction to the topic.

Next we will briefly discuss bispectrality, what the
connection is with time--band limiting and some noncommutative
generalizations that have been made. 

\subsection{Bispectrality}
Although bispectrality started out \cite{DG86} and still is a topic 
within integrable systems, a discussion of these roots is beyond the
scope of this introduction.

The notion of bispectrality usually involves
two (differential or difference) operators. One we call $D$
acting on a variable $x$ and another will be called $L$
acting on another variables $y$. Lastly we need a so-called
bispectral function
$\phi(x,y)$ that is an eigenfunction of both
$$
(D \cdot \phi)(x,y) = \lambda(y)\phi(x,y),
\qquad
(L \cdot \phi)(x,y) = \mu(x)\phi(x,y).
$$

One of the most basic examples of a bispectral function is
$$
\phi(x,y) = e^{-2\pi i x y}, 
\qquad  \qquad
L = \partial_y, 
\qquad
D = \partial_x.
$$
This is also the kernel of the Fourier transform and
the kernel of the integral operator \eqref{eq:integral_op}
comes from the expression
$$
\frac{\sin(\pi T(x-y))}{\pi(x-y)} = \int_{-T/2}^{T/2} e^{2\pi i x t}e^{-2\pi i y t}dt.
$$
This may not seem like a strong link between time--band limiting
and bispectrality at first, given the ubiquity of the exponential
function as well as that of Fourier analysis. However in \cite{GYprol}
certain types of integral kernels of the form
$$
\mathscr{K}(x,y)
=
\int_{\Gamma} \phi(x,z)\phi(y,z) dz,
$$
were found to have differential operators commuting with their integral
operators, where the $\phi$ are bispectral functions and $\Gamma$ is 
a contour in $\mathbb{C}$. After that, similar results were found in \cite{Casperprol} of the form
$$
\mathscr{K}(x,y)
=
\int_{\Gamma} \phi(x,z)\phi^\dagger(y,z) dz,
$$
were $\phi$ and $\phi^\dagger$ are bispectral functions associated to
KP hierarchy\footnote{The integrable hierarchy associated to
the Kadomtsev-Petviashvili equation.}. It should be noted though that the link between time--band
limiting and bispectrality was suggested already in \cite{DG86} at
bispectrality's inception.

In an effort to better understand the miraculous existence of
a differential operator with nice commutative properties
such as the one in \eqref{eq:integral_op}, a lot of work was
done to generalize this kind of analysis to different settings.
See for instance \cite{GLP}.

A sequence of classical orthogonal polynomials $(p_n(x))_n$ taken 
as a whole is also a bispectral function if we view the index as a 
discrete variable  $y=n\in\mathbb{N}_0$.
We can then define the operator $\delta$ as 
acting on a sequence $u_n$. We define it as a shift 
$(\delta^j\cdot u)_n = u_{n+j}$ as long as $n+j \geq 0$
and define any entries with negative index to equal $0$.
Then for $\phi(x,y)=p_n(x)$ the second eigenvalue equation would be
the three term recurrence relation
$$
(L\cdot p)_n(x) = xp_n(x), \qquad
L = a_n \delta + b_n + c_n \delta^{-1} ,
$$
and the first would be either a differential or difference operator
acting on the variable $x$ for which the polynomials are eigenfunctions.
The second operator is guaranteed to exist because we are dealing with
\textit{classical} orthogonal polynomials.

Casting classical orthogonal polynomials in the time--band 
limiting framework 
started with \cite{GLP, Grunnew} for continuous orthogonal polynomials,
and for the discrete case in \cite{perlchop, perl, perline}.
In these works analogous differential operators 
(or difference operators in the discrete case) were found that commuted 
with the analogous integral operator. Most of these
papers contained, in addition to the construction of the operators,
some analysis of the spectrum. For analysis of the eigenfunctions we
refer the reader to \cite{perleigen}.

\subsection{Noncommutative bispectrality and time--band limiting}

A noncommutative version of bispectrality was introduced
in \cite{GI03} with the goal of studying noncommutative
analogues of integrable lattice equations. 

Bispectrality in the noncommutative context involves
pairs of operators where one acts from the
left and the other from the right. It was also this thinking
that eventually led to the formalism of the Fourier algebras
which play a crucial role in \cite{Casper2020}.

This has been a motivation to pose a noncommutative
versions of time--band limiting problems in particular
those involving matrix-valued orthogonal polynomials \cite{GPZ15, GPZint}.
More work was done in this direction \cite{furthercastro, castrodarboux, GPZ} and it is \cite{GPZ} that we will build on further. 

The authors of \cite{GPZ} 
show how to construct 
matrix-valued differential operators that commute with
a corresponding integral operator with a matrix-valued
kernel. Various explicit examples that fit well into this
framework have been studied before, but these were always
matrices of size $2\times 2$. The goal of this paper is
to give $N\times N$ examples and to show how the strong Pearson
equations help to satisfy the conditions that are unique to the
matrix case. As a contrasting case we also treat an example in 
which the associated matrix weight does not satisfy strong 
Pearson equations and we subsequently see that this case only 
works for  $N=2$ and not for a number of larger matrix sizes.

\subsection{Overview}
The rest of this paper is structured as follows. 
Section \ref{sec:preTBL} reviews
the bare minimum of the noncommutative time--band limiting framework
that we will need for the results in Section \ref{sec:SP}.
In Section \ref{sec:WPO} we highlight some similarities as well as
differences between the different weights and operators
that will appear in the remainder of the paper.
We then apply this time--band limiting framework
to MVOP of Hermite--type, Laguerre--type,
Gegenbauer--type and Charlier--type in Section \ref{sec:SP}.
We also consider a slightly different Hermite--type scenario in 
Section \ref{sec:counter}, to which 
the framework is applicable for matrix size $N=2$ but not for some
larger matrix sizes. Finally the appendix has a double role. We use it 
to collect some explicit expressions of matrix weights and 
related quantities that appear in the proofs, but also to correct some
typos that have appeared in previous work.

\section{Noncommutative Time--Band Limiting for MVOP}
\label{sec:preTBL}
In this section we will very briefly recall the parts
of \cite{GPZ} that we will need to treat the examples that are
the main focus of this paper.

We consider a $N\times N$ matrix weight $W$ and its \emph{monic} MVOP $(P_n)_n$ with squared norms $\mathcal{H}_n$.
We then assume that the $P_n$ are simultaneous eigenfunctions of 
a $W$-symmetric second order differential operator $D$,
acting from the right and with eigenvalue matrix $\Lambda_n$ on 
the left
$$
P_n \cdot D = \Lambda_n P_n.
$$

\begin{definition}
%Let $W$ be a matrix weight supported on $(a,b)$ and $(P_n)_n$ the corresponding monic MVOP. 
Given $M \in \mathbb{N}_0$ and a matrix weight $W$, we define
the time-limiting operator $\chi_T^{(M)}$ to act as
$$
\left(F\cdot \chi_T^{(M)} \right)(x) = \sum_{n=0}^M \langle F, P_n\rangle \mathcal{H}_{n}^{-1} \, P_n(x)
$$
for matrix functions $F$ for which the matrix inner product 
$\langle F, P_n\rangle$ corresponding to $W$ is well-defined 
and has finite entries.

Given $\Omega\in\mathbb{R}$ the band--limiting operator
$\chi_B^{(\Omega)}$ acts
by multiplication of a
characteristic function
$(F\cdot\chi_B^{(\Omega)})(x)=
F(x)\mathbb{1}_{(-\infty,\Omega)}(x)$.
\end{definition}

\paragraph{Remark}
The subscripts for $\chi_T$ and $\chi_B$ refer to "time"
and "band". The variable $n$ plays the role of time,
$x$ plays the role of the spectral variable and each $\chi$
limits its respective variable.

\paragraph{Remark}
Note that the band--limiting operator $\chi_B^{(\Omega)}$
is the same no matter which MVOP are under consideration,
whereas this is not the case for the time--limiting operator
$\chi_T^{(M)}$.

One of the main results from \cite{GPZ} is the construction
of a differential operator $\mathcal{T}$ that commutes with both the
time-- and band--limiting operators separately.
A central requirement given in \cite[Equation (9)]{GPZ} is 
that we must be able to find a matrix $\mathcal{R}$ which
does not depend on $x$ or $\Omega$ and satisfies
\begin{equation}\label{eq:eq9}
\left(
\mathcal{R} 
- 
x(\Lambda_M + \Lambda_{M+1})
\right) 
W(x)
=
W(x)
\left(
\mathcal{R} 
- 
x(\Lambda_M +\Lambda_{M+1} )
\right)^\ast .
\end{equation}
Note that $\mathcal{R}$ is allowed to depend on $M$
and any other parameters that the weight $W$ might
have. 
%\textcolor{olive}{Why can $\mathcal{R}$ depend on $\Omega$?
%Because $\mathcal{T}$ needs to be $W$-symmetric wrt to the \emph{truncated}
%matrix inner product as well, I think.}

Finally equation (11a) in \cite{GPZ} gives the 
construction of $\mathcal{T}$ as
$$
\mathcal{T} 
= 
x D + D(x-2\Omega) 
- x(\Lambda_M +\Lambda_{M+1}) + \mathcal{R},
$$
where $\mathcal{T}$ acts from the right. Note that the
condition \eqref{eq:eq9} guarantees that $\mathcal{T}$
will be $W$-symmetric. This is because $x$ and $D$ are
$W$-symmetric and so $xD + D x$ and $-2\Omega D$ are
as well.

\paragraph{Remark}
Strictly speaking the results presented in \cite{GPZ} only apply to
$W$-symmetric \textit{differential} operators. However upon inspection, 
the proofs apply equally well to $W$-symmetric 
\textit{difference} operators. We will therefore apply it to the 
Charlier--type example in Section \ref{sec:charlierTBL}.

Each of the examples we treat in this paper have been introduced
in previous works \cite{IKR2,KR,KdlRR,EMR}, 
including the matrix weight and the $W$-symmetric
second order differential or difference operator. What is left for us to do
is find the matrix $\mathcal{R}$ that satisfies \eqref{eq:eq9}
in order to construct $\mathcal{T}$.

\section{Weights, Parameters and Operators}
\label{sec:WPO}
The matrix weights discussed in this paper have
a number of similarities. They all have a free
parameter $\nu$.
They also all have some other parameters $(\alpha_j)_{j=1}^N$
and $(t_j^{(\nu)})_{j=1}^N$, which can be required to satisfy
a certain set of nonlinear equations. This requirement then
implies that the matrix weight satisfies a strong Pearson equation.
In the appendix we list some parameter values such that this requirement
is met. Only the matrix weight in Section \ref{sec:counter} does
\emph{not} need to satisfy this requirement as we do not need a strong
Pearson equation, and so we only require the
parameters $\alpha_j$ and $t_j^{(\nu)}$ to be positive real numbers.

Each matrix weight is given in its LDU decomposition
$$
W^{(\nu)}(x) = L^{(\nu)}(x)T^{(\nu)}(x)L^{(\nu)}(x)^\ast,
$$
where the diagonal matrix entries are 
$(T^{(\nu)}(x))_{jj}=t_j^{(\nu)} w_j^{(\nu)}(x)$ with 
each $w_j^{(\nu)}$ a corresponding scalar weight.
For example for the Hermite--type case 
$w_j^{(\nu)}(x)=e^{-x^2}$ does not depend on $j$ or $\nu$ but for the Gegenbauer--type case $w_j^{(\nu)}(x)=(1-x^2)^{\nu+j-1/2}$.

The lower triangular matrix has nonzero entries of the form $L^{(\nu)}(x)_{jk} = \frac{\alpha_j}{\alpha_k} p_{j-k}^{(\nu+k)}(x)$ where $p_n^{(\nu)}$ are the corresponding scalar orthogonal
polynomials and where the parameter $\nu+k$ shifts with the column index
only when it is appropriate\footnote{The scalar Hermite polynomials do not
have any such parameter so unsurprisingly in that case $L^{(\nu)}(x)=L(x)$. However, this is also true for the Charlier polynomials who do have
a parameter that could have been shifted. This is not done because their scalar ladder relations do not involve shifting this parameter.}.

Even without the restriction on the parameters $\alpha_j$ and $t_j^{(\nu)}$,
each matrix weight already has at least one $W^{(\nu)}$-symmetric
second order differential or difference operator with the MVOP as
simultaneous eigenfunctions. When we impose the requirements to obtain
strong Pearson equations we get \emph{an additional} operator with
these properties. These additional operators are the ones that appear
in Sections \ref{sec:SP} and the other kind of
operator is studied in Section \ref{sec:counter} which, as we will see,
does not fit into the framework of \cite{GPZ} as nicely.

\paragraph{Remark} \label{rmk:N}
The matrix weights we consider in this paper are all of the form
$$
W(x) = w(x)Q(x),
$$
where $w$ is a scalar classical weight and $Q$
is a matrix polynomial of degree $2N-2$. This means that \eqref{eq:eq9}
in these cases will always be a matrix polynomial equation. Or since
we are looking to solve for the entries of $\mathcal{R}$, \eqref{eq:eq9}
is a inhomogeneous linear system. We point this out to note that
roughly speaking, as $N$ grows, the number of equations for the 
entries of $\mathcal{R}$ grows as $N^3$ whereas the number of parameters
obviously is just $N^2$. So we conclude that cases where we \emph{can}
find $\mathcal{R}$ for all $N\in\mathbb{N}$ are far from generic.

\section{Examples with strong Pearson equations}
\label{sec:SP}
\subsection{Hermite, Laguerre and Gegenbauer}
\label{sec:HLG}
In this section we discuss three examples which correspond to
Hermite--, Laguerre-- and Gegenbauer--type MVOP introduced in
\cite{IKR2}, \cite{KR} and \cite{KdlRR} respectively. We discuss
them at the same time due to their close similarity. 
All three of these
cases have a parameter\footnote{The parameter $\nu$ is usually taken to be
positive real though it is possible to extend it in some cases.} family of weights $W^{(\nu)}$ that satisfy
certain requirements we will call
\textit{strong Pearson equations}
$$
\biggl\{
\begin{array}{c}
W^{(\nu+1)}(x) = W^{(\nu)}(x)\Phi^{(\nu)}(x), \\
W^{(\nu+1)\prime}(x) = W^{(\nu)}(x)\Psi^{(\nu)}(x),
\end{array}
$$
where $\Phi^{(\nu)}$ and $\Psi^{(\nu)}$ are matrix polynomials
of degree $\leq 2$ and exactly equal to $1$ respectively
$$
\Phi^{(\nu)}(x) = x^2 \phi_2^{(\nu)}  + x \phi_1^{(\nu)} + \phi_0^{(\nu)},
\qquad
\Psi^{(\nu)}(x) = x \psi_1^{(\nu)} + \psi_0^{(\nu)}.
$$ 
Explicit expressions for these polynomials are listed in
the appendix.

One of the main consequences of the strong Pearson equations is
that the MVOP $P_n^{(\nu)}$ satisfy the eigenvalue equation
$P_n^{(\nu)} \cdot D^{(\nu)} = \Lambda_n^{(\nu)} P_n^{(\nu)}$
with
\begin{equation}\label{eq:DO}
D^{(\nu)} = \partial_x^2 \Phi^{(\nu)}(x)^\ast + \partial_x \Psi^{(\nu)}(x)^\ast ,
\qquad
\Lambda_n^{(\nu)} = n(n-1)\phi_2^{(\nu)\ast}+n \psi_1^{(\nu)\ast}.
\end{equation}
Another consequence is
\begin{equation}\label{eq:switching}
W^{(\nu)}(x)\Phi^{(\nu)}(x) = \Phi^{(\nu)}(x)^\ast W^{(\nu)}(x),
\qquad
W^{(\nu)}(x)\Psi^{(\nu)}(x) = \Psi^{(\nu)}(x)^\ast W^{(\nu)}(x),
\end{equation}
which follows from the fact that $W^{(\nu+1)}$ as well as 
$W^{(\nu+1)\prime}$ are symmetric matrices. The idea of the proof
of the following theorem is that the previous two equations
can be combined into an equation of the form of \eqref{eq:eq9}
and hence provide us with a matrix $\mathcal{R}^{(\nu)}$.

\begin{theorem} \label{thm:3}
Let $\mathcal{T}^{(\nu)}$ be the following matrix differential operator
$$
\mathcal{T}^{(\nu)}
=
x D^{(\nu)} + D^{(\nu)} (x
- 2\Omega)
- x(\Lambda_M^{(\nu)} +\Lambda_{M+1}^{(\nu)}) + \mathcal{R}^{(\nu)},
$$
as given in \cite[equation (11a)]{GPZ}.

\begin{itemize}
\item
When $D^{(\nu)}$ is as in the Hermite--type example \cite[Corollary 3.11]{IKR2}, then $\mathcal{R}_H^{(\nu)}=-(2M+1)\psi_0^{(\nu)\ast}$ satisfies
\eqref{eq:eq9}.
\item
When $D^{(\nu)}$ is as in the Laguerre--type example \cite[Corollary 6.3]{KR}, then $\mathcal{R}_L^{(\nu)}=-M^2\phi_1^{(\nu)\ast} -(2M+1)\psi_0^{(\nu)\ast}$ satisfies \eqref{eq:eq9}.
\item
When $D^{(\nu)}$ is as in the Gegenbauer--type example \cite[Corollary 2.5]{KdlRR} (denoted $\mathscr{D}^{(\nu)}$), then $\mathcal{R}_G^{(\nu)}=-\left(\frac{M^2}{2\nu+N} + 2M+1 \right)\psi_0^{(\nu)\ast}$ satisfies \eqref{eq:eq9}.
\end{itemize}
\end{theorem}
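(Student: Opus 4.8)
The plan is to verify the single matrix identity \eqref{eq:eq9} by hand in each of the three cases; once that is done, the claimed property of $\mathcal{T}^{(\nu)}$ is immediate from the discussion following \eqref{eq:eq9}. The only input needed is \eqref{eq:switching}, which --- since the matrix weights are self-adjoint --- says precisely that $\Phi^{(\nu)}(x)^\ast W^{(\nu)}(x)$ and $\Psi^{(\nu)}(x)^\ast W^{(\nu)}(x)$, and hence also $x\,\Phi^{(\nu)}(x)^\ast W^{(\nu)}(x)$ and $x\,\Psi^{(\nu)}(x)^\ast W^{(\nu)}(x)$, are Hermitian for every $x$ in the support. Since \eqref{eq:eq9} is exactly the assertion that $\bigl(\mathcal{R}^{(\nu)}-x(\Lambda_M^{(\nu)}+\Lambda_{M+1}^{(\nu)})\bigr)W^{(\nu)}(x)$ is Hermitian for all $x$, the whole proof reduces to rewriting $\mathcal{R}^{(\nu)}-x(\Lambda_M^{(\nu)}+\Lambda_{M+1}^{(\nu)})$ as a combination of matrix polynomials whose product with $W^{(\nu)}$ is manifestly Hermitian.

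First I would read off from \eqref{eq:DO} that
$$
\Lambda_M^{(\nu)}+\Lambda_{M+1}^{(\nu)} = 2M^2\,\phi_2^{(\nu)\ast} + (2M+1)\,\psi_1^{(\nu)\ast},
$$
and use $\Psi^{(\nu)}(x)^\ast = x\,\psi_1^{(\nu)\ast}+\psi_0^{(\nu)\ast}$ to trade $x\,\psi_1^{(\nu)\ast}$ for $\Psi^{(\nu)}(x)^\ast-\psi_0^{(\nu)\ast}$. In the Hermite--type case $\phi_2^{(\nu)}=0$, so with $\mathcal{R}_H^{(\nu)}=-(2M+1)\psi_0^{(\nu)\ast}$ one obtains
$$
\mathcal{R}_H^{(\nu)}-x(\Lambda_M^{(\nu)}+\Lambda_{M+1}^{(\nu)}) = -(2M+1)\,\Psi^{(\nu)}(x)^\ast,
$$
and \eqref{eq:eq9} is now nothing but $(2M+1)$ times the $\Psi^{(\nu)}$-identity in \eqref{eq:switching}. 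In the Laguerre--type case the same manipulation (again $\phi_2^{(\nu)}=0$) gives
$$
\mathcal{R}_L^{(\nu)}-x(\Lambda_M^{(\nu)}+\Lambda_{M+1}^{(\nu)}) = -M^2\,\phi_1^{(\nu)\ast} - (2M+1)\,\Psi^{(\nu)}(x)^\ast ,
$$
and in the Gegenbauer--type case, the coefficient of $\psi_0^{(\nu)\ast}$ in $\mathcal{R}_G^{(\nu)}$ being chosen exactly so that only one $\psi_0^{(\nu)\ast}$-term survives,
$$
\mathcal{R}_G^{(\nu)}-x(\Lambda_M^{(\nu)}+\Lambda_{M+1}^{(\nu)}) = -(2M+1)\,\Psi^{(\nu)}(x)^\ast - M^2\Bigl(2x\,\phi_2^{(\nu)\ast}+\tfrac{1}{2\nu+N}\psi_0^{(\nu)\ast}\Bigr).
$$
In both of these the $\Psi^{(\nu)}(x)^\ast$ contribution is again dispatched by \eqref{eq:switching}.

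What is left is to check that the remaining pieces become Hermitian after multiplication by $W^{(\nu)}$, and this is where the only genuinely matrix-specific work --- and, I expect, the main obstacle --- lies. For Laguerre one needs $W^{(\nu)}(x)\,\phi_1^{(\nu)}$ to be Hermitian; since the explicit form of the Pearson polynomial is $\Phi^{(\nu)}(x)=x\,\phi_1^{(\nu)}$ (no constant term, as in the scalar case), this is just the $\Phi^{(\nu)}$-identity in \eqref{eq:switching} with a factor of $x$ cancelled. For Gegenbauer one needs $\bigl(2x\,\phi_2^{(\nu)\ast}+\tfrac{1}{2\nu+N}\psi_0^{(\nu)\ast}\bigr)W^{(\nu)}(x)$ to be Hermitian; rewriting $W^{(\nu)}\psi_0^{(\nu)}-\psi_0^{(\nu)\ast}W^{(\nu)} = x\bigl(\psi_1^{(\nu)\ast}W^{(\nu)}-W^{(\nu)}\psi_1^{(\nu)}\bigr)$ by the $\Psi^{(\nu)}$-identity in \eqref{eq:switching} and cancelling a factor $x$, this collapses to the single requirement that
$$
W^{(\nu)}(x)\bigl(2(2\nu+N)\,\phi_2^{(\nu)}-\psi_1^{(\nu)}\bigr)
$$
be Hermitian for every $x$ --- which is precisely what forces the coefficient $\tfrac{1}{2\nu+N}$ in $\mathcal{R}_G^{(\nu)}$. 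Verifying this last identity --- together with the shape $\Phi^{(\nu)}(x)=x\,\phi_1^{(\nu)}$ used for Laguerre --- from the explicit LDU data of \cite{IKR2,KR,KdlRR} recalled in the appendix is the step with no scalar counterpart: unlike \eqref{eq:switching}, which follows automatically from the strong Pearson equations, it is a statement about how the particular coefficient matrices $\phi_2^{(\nu)},\psi_1^{(\nu)}$ (and $\phi_1^{(\nu)}$) sit relative to $W^{(\nu)}$, and it is here that the size $N$ enters, through the diagonal factor $T^{(\nu)}$. Once these pieces are in place, \eqref{eq:eq9} holds in all three cases, and the $W^{(\nu)}$-symmetry of $\mathcal{T}^{(\nu)}$ follows as recorded after \eqref{eq:eq9}.
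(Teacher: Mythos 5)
Your overall strategy --- rewrite $\mathcal{R}^{(\nu)}-x(\Lambda_M^{(\nu)}+\Lambda_{M+1}^{(\nu)})$ as a combination of matrix polynomials whose product with $W^{(\nu)}$ is Hermitian by \eqref{eq:switching} --- is exactly the paper's, and your Hermite case coincides with the paper's argument. The Laguerre case, however, rests on a false premise: you assert $\phi_2^{(\nu)}=0$ and $\Phi^{(\nu)}(x)=x\,\phi_1^{(\nu)}$, whereas by \eqref{eq:philaguerre} one has $\Phi^{(\nu)}(x)=x^2\phi_2^{(\nu)}+x\phi_1^{(\nu)}$ with $\phi_2^{(\nu)}=-d^{(\nu)}(L(0)^\ast)^{-1}A^\ast L(0)^\ast\neq 0$; it is the \emph{constant} term $\phi_0^{(\nu)}$ that vanishes. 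Consequently your displayed expression for $\mathcal{R}_L^{(\nu)}-x(\Lambda_M^{(\nu)}+\Lambda_{M+1}^{(\nu)})$ drops the term proportional to $x\phi_2^{(\nu)\ast}$, and your justification that $W^{(\nu)}\phi_1^{(\nu)}$ is Hermitian (``the $\Phi$-identity with a factor of $x$ cancelled'') evaporates. The paper's mechanism is precisely that $\phi_0^{(\nu)}=0$ allows one to divide the $\Phi$-identity in \eqref{eq:switching} by $x$, so that the degree-one polynomial $x\phi_2^{(\nu)}+\phi_1^{(\nu)}$ --- not $\phi_1^{(\nu)}$ alone --- inherits the switching property \eqref{eq:switchL}, and the leftover is a multiple of $x^{-1}\Phi^{(\nu)}(x)^\ast$ plus a multiple of $\Psi^{(\nu)}(x)^\ast$.

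For Gegenbauer your decomposition leaves the residue $-M^2\bigl(2x\phi_2^{(\nu)\ast}+\tfrac{1}{2\nu+N}\psi_0^{(\nu)\ast}\bigr)$, whose Hermiticity against $W^{(\nu)}$ you reduce (correctly) to that of $W^{(\nu)}\bigl(2(2\nu+N)\phi_2^{(\nu)}-\psi_1^{(\nu)}\bigr)$, i.e.\ of $W^{(\nu)}\psi_1^{(\nu)}$ since $\phi_2^{(\nu)}=\tfrac{1}{2\nu+N}\psi_1^{(\nu)}$. You leave this unproven, and it does not follow from \eqref{eq:switching}; worse, since $\psi_1^{(\nu)}$ is a real diagonal matrix with non-constant diagonal, the identity would force it to commute with the non-diagonal $W^{(\nu)}(x)$ for every $x$, so this route cannot be completed. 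The paper instead uses $\phi_2^{(\nu)}=\tfrac{1}{2\nu+N}\psi_1^{(\nu)}$ one step earlier, at the level of the eigenvalues: $\Lambda_n^{(\nu)}$ becomes a scalar multiple of $\psi_1^{(\nu)\ast}$, so that $\mathcal{R}_G^{(\nu)}-x(\Lambda_M^{(\nu)}+\Lambda_{M+1}^{(\nu)})$ is a scalar multiple of $\Psi^{(\nu)}(x)^\ast$ and only the $\Psi$-identity in \eqref{eq:switching} is needed. Note finally that your residues appear because your (arithmetically correct) evaluation $\Lambda_M^{(\nu)}+\Lambda_{M+1}^{(\nu)}=2M^2\phi_2^{(\nu)\ast}+(2M+1)\psi_1^{(\nu)\ast}$ carries the coefficient $2M^2$, while the stated $\mathcal{R}_L^{(\nu)}$ and $\mathcal{R}_G^{(\nu)}$ are tuned to the coefficient $M^2$ used in the paper's proof; this mismatch must be confronted and resolved (it points to a factor-of-two discrepancy in the $M^2$ terms), not absorbed into an unprovable Hermiticity claim.
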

\begin{proof}
For the Hermite--type case $\Phi^{(\nu)}$ is a degree 1 polynomial, see
\eqref{eq:phihermite}. So by \eqref{eq:DO} we have 
$\Lambda_n^{(\nu)}=n\psi_1^{(\nu)\ast}$. 
This means that when
we choose $\mathcal{R}^{(\nu)}=-(2M+1)\psi_0^{(\nu)\ast}$ the degree
1 polynomial that appears in \eqref{eq:eq9} is
$$
\mathcal{R}^{(\nu)} - x(\Lambda_M^{(\nu)}+\Lambda_{M+1}^{(\nu)} )
=
-(2M+1)\Psi^{(\nu)}(x)^\ast.
$$
The second equation in \eqref{eq:switching} is then equivalent to the desired
condition in \eqref{eq:eq9}.

For the Laguerre case $\Phi^{(\nu)}$ is of degree 2 but $\phi_0^{(\nu)}=0$,
see \eqref{eq:philaguerre}.
So now $x^{-1}\Phi^{(\nu)}(x)=x\phi_2^{(\nu)}+\phi_1^{(\nu)}$ is a degree 1 polynomial.
We can leverage this and \eqref{eq:switching} to obtain
\begin{equation}
\label{eq:switchL}
\begin{aligned}
(x\phi_2^{(\nu)\ast}+\phi_1^{(\nu)\ast})W^{(\nu)}(x) = W^{(\nu)}(x)(x\phi_2^{(\nu)\ast}+\phi_1^{(\nu)\ast})^\ast,
\\
(x\psi_1^{(\nu)\ast}+\psi_0^{(\nu)\ast} )W^{(\nu)}(x) = W^{(\nu)}(x)(x\psi_1^{(\nu)\ast}+\psi_0^{(\nu)\ast})^\ast.
\end{aligned}
\end{equation}
Since in this case by \eqref{eq:DO} $\Lambda_n^{(\nu)}=n(n-1)\phi_2^{(\nu)\ast} + n \psi_1^{(\nu)\ast}$, we can use
$\mathcal{R}^{(\nu)} = -M^2\phi_1^{(\nu)\ast} -(2M+1)\psi_0^{(\nu)\ast} $
to get
$$
\mathcal{R}^{(\nu)} - x(\Lambda_M^{(\nu)}+\Lambda_{M+1}^{(\nu)} )
=
-\frac{M^2}{x}\Phi^{(\nu)}(x)^\ast -(2M+1)\Psi^{(\nu)}(x)^\ast.
$$
Using both equations in \eqref{eq:switchL} it can be seen that the condition 
in \eqref{eq:eq9} is satisfied.

For the Gegenbauer case we have the good fortune that the leading
coefficients of $\Phi^{(\nu)}$ and $\Psi^{(\nu)}$ are equal up to
scalar factor $\phi_2^{(\nu)} = \frac{1}{2\nu+N} \psi_1^{(\nu)}$,
see \eqref{eq:phigegenbauer}. 
So then the eigenvalue in \eqref{eq:DO} simplifies to
$\Lambda_n^{(\nu)}= (\frac{n(n-1)}{2\nu+N}+n)\psi_1^{(\nu)\ast}$,
and in a similar way to the Hermite case, we can set
$\mathcal{R}^{(\nu)} = -\left(\frac{M^2}{2\nu+N} + 2M+1 \right)\psi_0^{(\nu)\ast}$. The degree 1 polynomial that appears in \eqref{eq:eq9} is then
$$
\mathcal{R}^{(\nu)} - x(\Lambda_M^{(\nu)}+\Lambda_{M+1}^{(\nu)} )
=
-\left(\frac{M^2}{2\nu+N} + 2M+1 \right)\Psi^{(\nu)}(x)^\ast.
$$
which satisfies \eqref{eq:eq9} again due to the equation in \eqref{eq:switching} involving $\Psi^{(\nu)}$.

\end{proof}

In \cite{GPZ} the reason to construct an operator like $\mathcal{T}^{(\nu)}$
is its commutativity with the time-- and
band--limiting operators.
\begin{corollary}
Given the values $\Omega\in \mathbb{R}$ and $M\in\mathbb{N}_0$ 
the matrix differential operator $\mathcal{T}^{(\nu)}$ in Theorem \ref{thm:3}
constructed with the quantities in \cite[Corollary 3.11]{IKR2}
and with $\mathcal{R}^{(\nu)}=\mathcal{R}_H^{(\nu)}$, commutes with
the band--limiting operator $\chi_B^{(\Omega)}$ and the time--limiting
operator $\chi_T^{(M)}$ that corresponds to the weight $W^{(\nu)}$ described in \cite[Section 3.3]{IKR2}.
\end{corollary}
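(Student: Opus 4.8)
The plan is simply to check that the hypotheses under which \cite{GPZ} proves the commutativity of $\mathcal{T}$ with the time-- and band--limiting operators are all satisfied by the Hermite--type data, and then to invoke that result. Recall from Section \ref{sec:preTBL} that \cite{GPZ} requires: a matrix weight $W$ with its monic MVOP $(P_n)_n$ and squared norms $\mathcal{H}_n$; a $W$--symmetric second order differential operator $D$ acting from the right with $P_n\cdot D = \Lambda_n P_n$; and a matrix $\mathcal{R}$, independent of $x$ and $\Omega$, satisfying \eqref{eq:eq9}. Given these, the operator $\mathcal{T} = xD + D(x-2\Omega) - x(\Lambda_M +\Lambda_{M+1}) + \mathcal{R}$ commutes separately with $\chi_T^{(M)}$ and with $\chi_B^{(\Omega)}$, and it is this statement, specialized to our situation, that we want.

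First I would point to \cite[Section 3.3]{IKR2} for the Hermite--type weight $W^{(\nu)}$: it has finite moments of all orders, so its monic MVOP $P_n^{(\nu)}$ and the squared norms $\mathcal{H}_n$ are well defined and the time--limiting operator $\chi_T^{(M)}$ of Section \ref{sec:preTBL} is meaningful. Next, \cite[Corollary 3.11]{IKR2} supplies the $W^{(\nu)}$--symmetric second order differential operator $D^{(\nu)}$ with $P_n^{(\nu)}\cdot D^{(\nu)} = \Lambda_n^{(\nu)}P_n^{(\nu)}$; by \eqref{eq:DO} with $\phi_2^{(\nu)}=0$ one has $\Lambda_n^{(\nu)} = n\psi_1^{(\nu)\ast}$, which is exactly the eigenvalue used in the proof of Theorem \ref{thm:3}. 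Finally, since $W^{(\nu)}$ is supported on all of $\mathbb{R}$, the band--limiting operator $\chi_B^{(\Omega)}$ is genuinely nontrivial for every $\Omega\in\mathbb{R}$.

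It then remains to produce the matrix $\mathcal{R}$ of \eqref{eq:eq9}, and this is precisely the content of the Hermite--type item of Theorem \ref{thm:3}: the matrix $\mathcal{R}^{(\nu)} = \mathcal{R}_H^{(\nu)} = -(2M+1)\psi_0^{(\nu)\ast}$ depends only on $M$, on $\nu$, and on the parameters of the weight --- in particular \emph{not} on $x$ or $\Omega$ --- and satisfies \eqref{eq:eq9}. With all three hypotheses in place, the construction of \cite[equation (11a)]{GPZ} produces $\mathcal{T}^{(\nu)}$ and the corresponding commutativity result of \cite{GPZ} gives the asserted commutativity of $\mathcal{T}^{(\nu)}$ with $\chi_B^{(\Omega)}$ and with $\chi_T^{(M)}$.

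I do not expect a real obstacle here: the corollary is essentially a packaging of Theorem \ref{thm:3} together with the machinery recalled in Section \ref{sec:preTBL}. The one point worth a line of care is a matter of conventions --- one should confirm that the weight and differential operator taken from \cite{IKR2} are being used in the monic, right--acting, $\ast$--symmetric normalization assumed in \cite{GPZ} (and hence in Section \ref{sec:preTBL}), since otherwise a similarity transformation would be needed before \eqref{eq:eq9} could be applied verbatim. But this compatibility is already implicit in the proof of Theorem \ref{thm:3}, where $D^{(\nu)}$, $\Lambda_n^{(\nu)}$ and the strong Pearson data were manipulated in exactly these conventions, so nothing further is required.
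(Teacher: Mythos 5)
Your argument is correct and is essentially the paper's own proof, which simply invokes the main results of \cite{GPZ} once Theorem \ref{thm:3} supplies the matrix $\mathcal{R}_H^{(\nu)}$ satisfying \eqref{eq:eq9}; your version just spells out the hypothesis-checking in more detail.
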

\begin{proof}
The proof follows immediately from the main results in \cite{GPZ}.
\end{proof}
Analogous results hold for the Laguerre-- and Gegenbauer--type examples.
In each example the band--limiting operator is always the same but the time--limiting operator is different for each case, because it involves
the matrix inner product and the MVOP.

\subsection{Charlier}
\label{sec:charlierTBL}

In this section we discuss the Charlier--type MVOP that were  introduced
 in \cite{EMR}.
Let us first recall some notation for
the forward and backwards finite shift operators
$$
\left(F\cdot \Delta \right)(x) = F(x+1)-F(x),
\qquad
\left(F\cdot \nabla \right)(x) = F(x)-F(x-1).
$$

As before we have a family of weights $W^{(\nu)}$ parametrized by
$\nu \in \mathbb{N}_0$ that satisfies
certain requirements which are discrete analogues to the 
\textit{strong Pearson equations}
$$
\Biggl\{
\begin{aligned}
W^{(\nu+1)}(x) &= W^{(\nu)}(x)\Phi^{(\nu)}(x), \\
(W^{(\nu+1)}\cdot\nabla )(x) &= W^{(\nu)}(x)\Psi^{(\nu)}(x),
\end{aligned}
$$
where $\Phi^{(\nu)}$ and $\Psi^{(\nu)}$ are matrix polynomials
$$
\Phi^{(\nu)}(x) = x^2 \phi_2^{(\nu)}  + x \phi_1^{(\nu)} + \phi_0^{(\nu)},
\qquad
\Psi^{(\nu)}(x) = x \psi_1^{(\nu)} + \psi_0^{(\nu)}.
$$ 
Explicit expressions for these polynomials were given in \cite{EMR} 
but are also listed in
the appendix.

As in Section \ref{sec:HLG} one of the main consequences of 
these strong Pearson equations is
that the MVOP $P_n^{(\nu)}$ satisfy the eigenvalue equation
$P_n^{(\nu)} \cdot D^{(\nu)} = \Lambda_n^{(\nu)} P_n^{(\nu)}$
but now with a difference operator (which is denoted $\Delta S^{(\lambda)}$
in \cite{EMR})
\begin{equation}\label{eq:DiffO_new}
D^{(\nu)} = - \Delta \nabla \Phi^{(\nu)}(x)^\ast - \nabla\Psi^{(\nu)}(x)^\ast ,
\qquad
\Lambda_n^{(\nu)} = - n(n-1)\phi_2^{(\nu)\ast} - n \psi_1^{(\nu)\ast}.
\end{equation}
Another consequence is
\begin{equation}\label{eq:switching_new}
W^{(\nu)}(x)\Phi^{(\nu)}(x) = \Phi^{(\nu)}(x)^\ast W^{(\nu)}(x),
\qquad
W^{(\nu)}(x)\Psi^{(\nu)}(x) = \Psi^{(\nu)}(x)^\ast W^{(\nu)}(x),
\end{equation}
which follows from the fact that $W^{(\nu+1)}$ is a symmetric matrix. 
The idea of the proof
of the following theorem is that the previous two equations
can be combined into an equation of the form of \eqref{eq:eq9}
and hence provide us with a matrix $\mathcal{R}^{(\nu)}$.

\begin{theorem} \label{thm:charlier}
Let $\mathcal{T}^{(\nu)}$ be the following matrix difference operator
$$
\mathcal{T}^{(\nu)}
=
x D^{(\nu)} + D^{(\nu)} (x
- 2\Omega)
- x(\Lambda_M^{(\nu)} +\Lambda_{M+1}^{(\nu)}) + \mathcal{R}^{(\nu)},
$$
as given in \cite[equation (11a)]{GPZ}.
When $D^{(\nu)}$ is as in the Charlier--type example \eqref{eq:DiffO_new}, then $\mathcal{R}_C^{(\nu)}=M^2 (\phi_1^{(\nu)\ast}-\psi_1^{(\nu)\ast})+(2M+1)\psi_0^{(\nu)\ast}$ satisfies \eqref{eq:eq9}.
\end{theorem}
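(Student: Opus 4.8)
The plan is to mirror the proof of Theorem~\ref{thm:3}. The difference operator $D^{(\nu)}$ never enters \eqref{eq:eq9} directly; only its eigenvalue matrix does, so the whole theorem reduces to an algebraic identity between matrix polynomials. First I would read off from \eqref{eq:DiffO_new}
\[
\Lambda_M^{(\nu)}+\Lambda_{M+1}^{(\nu)}
=-2M^2\,\phi_2^{(\nu)\ast}-(2M+1)\,\psi_1^{(\nu)\ast},
\]
and substitute the claimed $\mathcal{R}_C^{(\nu)}$ to obtain the degree-one matrix polynomial
\[
\mathcal{R}_C^{(\nu)}-x\bigl(\Lambda_M^{(\nu)}+\Lambda_{M+1}^{(\nu)}\bigr)
=M^2\bigl(2x\,\phi_2^{(\nu)\ast}+\phi_1^{(\nu)\ast}-\psi_1^{(\nu)\ast}\bigr)+(2M+1)\,\Psi^{(\nu)}(x)^\ast .
\]
The goal is to exhibit the right-hand side as a linear combination, with \emph{real scalar} coefficients, of matrix polynomials that are $W^{(\nu)}$-symmetric in the sense of \eqref{eq:switching_new}; the obvious candidates are $\Phi^{(\nu)}(x)^\ast$ and $\Psi^{(\nu)}(x)^\ast$, and — exactly as in the Laguerre step \eqref{eq:switchL} — also $x^{-1}\Phi^{(\nu)}(x)^\ast$ whenever $\phi_0^{(\nu)}=0$, since then dividing the $\Phi$-part of \eqref{eq:switching_new} by $x$ is a legitimate polynomial identity.

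The second summand $(2M+1)\,\Psi^{(\nu)}(x)^\ast$ already has the right form, so the substance is the first summand: I would insert the explicit Charlier expressions for $\phi_2^{(\nu)},\phi_1^{(\nu)},\psi_1^{(\nu)}$ listed in the appendix and check that $2x\,\phi_2^{(\nu)}+\phi_1^{(\nu)}-\psi_1^{(\nu)}$ is $W^{(\nu)}$-symmetric — concretely, that it is a real scalar multiple of $x^{-1}\Phi^{(\nu)}(x)^\ast$ (or, in the favourable case that the leading coefficients of $\Phi^{(\nu)}$ and $\Psi^{(\nu)}$ are proportional as for Gegenbauer, of $\Psi^{(\nu)}(x)^\ast$, which would make the argument a verbatim copy of the Hermite case). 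Once this is in hand, write $\mathcal{R}_C^{(\nu)}-x(\Lambda_M^{(\nu)}+\Lambda_{M+1}^{(\nu)})=c_1\,x^{-1}\Phi^{(\nu)}(x)^\ast+c_2\,\Psi^{(\nu)}(x)^\ast$ with $c_1,c_2\in\mathbb{R}$, multiply on the right by $W^{(\nu)}(x)$, apply \eqref{eq:switching_new} term by term, and use $(\Phi^{(\nu)}(x)^\ast)^\ast=\Phi^{(\nu)}(x)$, $(\Psi^{(\nu)}(x)^\ast)^\ast=\Psi^{(\nu)}(x)$ together with the reality of $c_1,c_2$; the result is precisely \eqref{eq:eq9}.

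The step I expect to be the real obstacle is this middle one. Compared with the Hermite answer $\mathcal{R}_H^{(\nu)}\propto\psi_0^{(\nu)\ast}$, the Charlier $\mathcal{R}_C^{(\nu)}$ carries the extra term $M^2(\phi_1^{(\nu)\ast}-\psi_1^{(\nu)\ast})$; it is there precisely to absorb the first-order (that is, $n$-linear) mismatch that the finite difference $\nabla$ in \eqref{eq:DiffO_new} produces relative to $\partial_x$, which is also why the coefficient is forced to be $M^2$. Verifying that this term combines with the $-2M^2x\,\phi_2^{(\nu)\ast}$ coming from $x(\Lambda_M^{(\nu)}+\Lambda_{M+1}^{(\nu)})$ to reconstitute a scalar multiple of a $W^{(\nu)}$-symmetric polynomial is where the specific structure of the Charlier weight — and hence its explicit appendix data — is genuinely needed, and it is this matching that the statement of the theorem ultimately records.
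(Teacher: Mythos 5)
Your overall strategy is the paper's: reduce \eqref{eq:eq9} to writing $\mathcal{R}_C^{(\nu)}-x(\Lambda_M^{(\nu)}+\Lambda_{M+1}^{(\nu)})$ as a real-linear combination of matrix polynomials that intertwine with $W^{(\nu)}$ in the sense of \eqref{eq:switching_new}. But the step you yourself flag as ``the real obstacle'' is where your plan breaks down, because your list of candidate intertwiners does not apply to the Charlier data. You propose $x^{-1}\Phi^{(\nu)}(x)^\ast$ ``whenever $\phi_0^{(\nu)}=0$'' (the Laguerre mechanism) or proportionality of the leading coefficients $\phi_2^{(\nu)}\propto\psi_1^{(\nu)}$ (the Gegenbauer mechanism). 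Neither holds here: by \eqref{eq:phicharlier} the coefficient $\phi_0^{(\nu)}$ is an invertible (in particular nonzero) matrix, so $x^{-1}\Phi^{(\nu)}$ is not a polynomial, and $\phi_2^{(\nu)}=-\tfrac12 A^\ast(A^\ast+I)^{-1}$ is strictly triangular while $\psi_1^{(\nu)}$ has nonzero diagonal, so they are not proportional. Hence $2x\phi_2^{(\nu)}+\phi_1^{(\nu)}-\psi_1^{(\nu)}$ is not a scalar multiple of anything in your candidate set, and ``inserting the explicit expressions'' would only confirm that.

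The missing idea is the Charlier-specific identity $\phi_0^{(\nu)}=\psi_0^{(\nu)}$. It makes $x^{-1}\bigl(\Phi^{(\nu)}(x)-\Psi^{(\nu)}(x)\bigr)=x\phi_2^{(\nu)}+\phi_1^{(\nu)}-\psi_1^{(\nu)}$ a genuine degree-one matrix polynomial, and subtracting the two identities in \eqref{eq:switching_new} and dividing by the scalar $x$ shows that it intertwines with $W^{(\nu)}$. The paper then exhibits $\mathcal{R}_C^{(\nu)}-x(\Lambda_M^{(\nu)}+\Lambda_{M+1}^{(\nu)})$ as $\tfrac{M^2}{x}\bigl(\Phi^{(\nu)}(x)^\ast-\Psi^{(\nu)}(x)^\ast\bigr)+(2M+1)\Psi^{(\nu)}(x)^\ast$; the term $M^2(\phi_1^{(\nu)\ast}-\psi_1^{(\nu)\ast})$ in $\mathcal{R}_C^{(\nu)}$ is precisely the constant term of the first summand after $\phi_0^{(\nu)}=\psi_0^{(\nu)}$ cancels, not an artefact of $\nabla$ versus $\partial_x$ as you suggest. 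One normalization issue to reconcile when completing the argument: your computation $\Lambda_M^{(\nu)}+\Lambda_{M+1}^{(\nu)}=-2M^2\phi_2^{(\nu)\ast}-(2M+1)\psi_1^{(\nu)\ast}$ (which is what \eqref{eq:DiffO_new} literally gives) carries a factor $2$ on the $\phi_2^{(\nu)\ast}$-term relative to the value $-M^2\phi_2^{(\nu)\ast}-(2M+1)\psi_1^{(\nu)\ast}$ used in the paper's proof; with your coefficient the decomposition above forces $2M^2(\phi_1^{(\nu)\ast}-\psi_1^{(\nu)\ast})$ in $\mathcal{R}_C^{(\nu)}$ rather than $M^2(\phi_1^{(\nu)\ast}-\psi_1^{(\nu)\ast})$, so the identity does not close with the stated $\mathcal{R}_C^{(\nu)}$ until that discrepancy is resolved.
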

\begin{proof}

We start off using \eqref{eq:DiffO_new} to show that
$\Lambda_M^{(\nu)} + \Lambda_{M+1}^{(\nu)} = -M^2 \phi_2^{(\nu)\ast}-(2M+1)\psi_1^{(\nu)\ast}$. 
For the Charlier--type case $\Phi^{(\nu)}$ and $\Psi^{(\nu)}$ have
the same constant term: $\psi_0^{(\nu)}=\phi_0^{(\nu)}$. This means
that $x^{-1}(\Phi^{(\nu)}(x) - \Psi^{(\nu)}(x)) = x\phi_2^{(\nu)} +\phi_1^{(\nu)}-\psi_1^{(\nu)}$ is a degree 1 polynomial.
Due to \eqref{eq:switching_new} we see that this polynomial has a desired
commutation property with $W^{(\nu)}$. If we now choose $\mathcal{R}_C^{(\nu)}=M^2 (\phi_1^{(\nu)\ast}-\psi_1^{(\nu)\ast})+(2M+1)\psi_0^{(\nu)\ast}$ we get that
$$
\mathcal{R}_C^{(\nu)} - x(\Lambda_M^{(\nu)} + \Lambda_{M+1}^{(\nu)})
=
\frac{M^2}{x}(\Phi^{(\nu)}(x)^\ast - \Psi^{(\nu)}(x)^\ast)+(2M+1) \Psi^{(\nu)}(x)^\ast,
$$
which then satisfies \eqref{eq:eq9}.
\end{proof}

\section{Counterexample}
\label{sec:counter}

In Section \ref{sec:HLG} we studied the Hermite--type matrix weight 
given in \cite[Section 3.3]{IKR2} that had certain restrictions 
on its parameters $\alpha_j$ and $t_j^{(\nu)}$.
In \cite[Section 3.2]{IKR2} however the weight
is described without these restrictions and so in this case one 
cannot derive the strong Pearson equations and the differential operator
that follows from them. This latter type of weight has also been studied
in \cite{DER}. Without the Pearson equations the parameter $\nu$
loses meaning so we drop that part of the notation.

Nevertheless there is \textit{another} 
$W$-symmetric second order
differential operator that has the MVOP as eigenfunctions. We denote
this operator with a slightly different normalization than in \cite[Proposition 3.5]{IKR2} as
$$
D
=
-\tfrac12 \partial_x^2 + \partial_x \left(xI-A\right)+J.
$$
Here $J=\textrm{diag}(1,2,\dots, N)$ is diagonal and $A$ only has nonzero entries on the first subdiagonal $A_{j,j-1}=\frac{2\alpha_j}{\alpha_{j-1}}$.
$D$ has the monic MVOP $P_n$ as eigenfunctions with eigenvalue $\Lambda_n= nI+J$.

\paragraph{Remark}
We note that the parameters $\alpha_j$ and $t_j^{(\nu)}$ 
are free in this context, though we do
take them to be positive in order to guarantee that the 
weight is irreducible and positive definite.

In this situation \eqref{eq:eq9} looks very different because the
$n$-dependent part of $\Lambda_n$ commutes with $W$ and hence does not contribute. This also means that $\mathcal{R}$ will not depend on $M$. 
What we are left with is
\begin{equation}
\label{eq:eq9new}
\mathcal{R} W(x)
- 
W(x)
\mathcal{R}^\ast
=
2x[J,W(x)],
\end{equation}
where $[\cdot, \cdot]$ denotes the usual
commutator. The above equation is inherently antisymmetric
and can be reduced to a matrix polynomial equation. We therefore
have a system of $N(N-1)/2$ scalar polynomial equations.

\subsection{$N=2$}
Using the expressions in the appendix we
can write out the $2\times 2$ weight as
$$
W(x)
=
e^{-x^2}t_1
\begin{pmatrix}
1 & 2x  \tfrac{\alpha_2}{\alpha_1} \\
2x  \tfrac{\alpha_2}{\alpha_1}
& \tfrac{t_2}{t_1} + 4x^2 \tfrac{\alpha_2^2}{\alpha_1^2}
\end{pmatrix}.
$$
Then equation \eqref{eq:eq9new} amounts to one independent polynomial equation
that must hold for all $x\in\mathbb{R}$
$$
\mathcal{R}_{21}-\mathcal{R}_{12}\tfrac{t_2}{t_1}
-
2(\mathcal{R}_{11}-\mathcal{R}_{22})\tfrac{\alpha_2}{\alpha_1}x
-
4 \mathcal{R}_{12}\tfrac{\alpha_2^2}{\alpha_1^2}x^2
=
4 \tfrac{\alpha_2^2}{\alpha_1^2}x^2.
$$
This can be easily solved with
$$
\mathcal{R}
=
\begin{pmatrix}
c & -1 \\
-\tfrac{t_2}{t_1} & c
\end{pmatrix},
\qquad
c\in\mathbb{R}.
$$
\subsection{$N>2$}
For higher matrix size $N$ the system of equations becomes 
larger quickly, but not necessarily difficult because 
\eqref{eq:eq9new} is just
an inhomogeneous linear system of equations in the entries 
of $\mathcal{R}$, as mentioned in the Remark on page \pageref{rmk:N}.

Computer algebra calculations for $N\in\{3,4,5,6\}$ indicate that
\eqref{eq:eq9new} has no solution for $\mathcal{R}$.

\subsection*{Funding}
This work was supported by FWO (Research Foundation Flanders, Belgium) [grant number G0C9819N].

\subsection*{Acknowledgements}
The author would like to thank prof. dr. Walter Van Assche and prof.
dr. Erik Koelink for their guidance during this project as well as
prof. dr. Mirta Mar\'{i}a Castro Smirnova for helpful feedback regarding
the introduction of this paper.

\begin{appendix}

\section*{Appendix: Glossary of Explicit Expressions}
%\label{appendix}

The matrix differential operators
that appear in this paper which commute with
their corresponding time-- and band-- limiting
operators can be constructed with the ingredients
listed in this appendix.

\section{Hermite--type}
%\label{app:hermite}
The matrix weight introduced in \cite[Equation (3.5)]{IKR2} 
is given in LDU
form by (we omit the $\alpha$ superscript)
$$
W^{(\nu)}(x) = L(x)T^{(\nu)}(x)L(x)^\ast,
$$
with $T^{(\nu)}(x)_{jj}=t_j^{(\nu)}e^{-x^2}$ and
$L(x)_{j\geq k} = \frac{\alpha_j}{\alpha_k} \frac{H_{j-k}(x)}{(j-k)!}$,
where $H_n(x)$ denotes the $n$-th standard scalar Hermite polynomial.

For the matrix weight in Section \ref{sec:counter} we can leave the
parameters $t_j^{(\nu)}$ and $\alpha_j$ as free positive real numbers,
but for the Hermite--type example in Section \ref{sec:SP} we need
them to satisfy additional conditions given in \cite[equations (3.7) and (3.9)]{IKR2}.
Three explicit parameter sets that satisfy 
these conditions are given in \cite{IKR2} which we list below.

For $k\in\{1,\dots, N\}$, $\nu>0$, $\lambda>0$:
\begin{align*}
&\begin{cases}
d^{(\nu)} = \frac{1}{\nu+1} , \quad
\alpha_k = \sqrt{2^{1-k}(N-k+1)_{k-1}}, \\
c^{(\nu)} = \frac{\nu}{\nu+1}, \quad
t_k^{(\nu)}  = \frac{(\nu+1)_{k-1}}{(k-1)!} ,
\end{cases}
\\
&\begin{cases}
d^{(\nu)} = \lambda , \quad
\alpha_k = 2^{1-k}\sqrt{(k-1)!(N-k+1)_{k-1}}, \\
c^{(\nu)} = \lambda\nu , \quad
t_k^{(\nu)}  = 2^{-k} \lambda^\nu \Gamma(\nu+k),
\end{cases}
\end{align*}
and for the last set we additionally require
$ \rho>0$ and $C\geq 0$:
$$
\begin{cases}
d^{(\nu)} = \rho  , \qquad \qquad
\alpha_k = 1, \\
c^{(\nu)} = C+\nu\rho , \quad
t_k^{(\nu)}  = \frac{2^{k-1}(\nu+1+C/\rho)_{k-1}}{(k-1)!(N-k+1)_{k-1}}\Gamma(\nu+1+C/\rho). 
\end{cases}
$$
For any of these parameter sets we have
\begin{equation}\label{eq:phihermite}
\Phi^{(\nu)}(x)=
x\phi_1^{(\nu)} + \phi_0^{(\nu)}
\qquad
\Psi^{(\nu)}(x) = 
x\psi_1^{(\nu)} + \psi_0^{(\nu)}
\end{equation}
with
$ \phi_1^{(\nu)} 
=
-d^{(\nu)} A^\ast $,
 $ \phi_0^{(\nu)}
=
d^{(\nu)}(J+\tfrac12 (A^\ast)^2) + c^{(\nu)}I$
and
\begin{align*}
\psi_1^{(\nu)}
&= 
2 (d^{(\nu)}(J - (N + 1) I) - c^{(\nu)}I),
\\
\psi_0^{(\nu)} 
&= 
A^\ast (c^{(\nu)}I + d^{(\nu)} ((N+ 1)I - J)) + 
  \tfrac12 d^{(\nu)}\widetilde{A} J (NI - J),
\end{align*}
with $J = \mathrm{diag}(1,\dots, N)$, $ A_{k,k-1} = 2 \frac{\alpha_k}{\alpha_{k-1}}$ and $\widetilde{A}_{k,k-1}=2\frac{\alpha_{k-1}}{\alpha_k}$,
 $k\in \{ 2,\dots, N\}$.

\section{Laguerre--type}
%\label{app:laguerre}
The matrix weight introduced in \cite[Equation (3.8)]{KR} is 
given in LDU form by (we omit the $\alpha$ superscript)
$$
W^{(\nu)}(x) = L(x)T^{(\nu)}(x)L(x)^\ast,
$$
with $T^{(\nu)}(x)_{jj}= x^{\nu+k}e^{-x} \Delta_{jj}^{(\nu)}=x^{\nu+k}e^{-x}t_j^{(\nu)}$ and
$L(x)_{j\geq k} = \frac{\alpha_j}{\alpha_k} \ell_{j-k}^{(a+k)}(x)$,
where $\ell_n^{(a)}(x)$ denotes the $n$-th standard scalar Laguerre polynomial.
We give three explicit parameter sets that ensure the weight 
satisfies strong Pearson equations as given in \cite{KR},
except the first set which contained a typo for the $t_j^{(\nu)}$.
These hold for $k\in\{1,\dots, N\}$, $\nu>0$, $\lambda>0$:
\begin{align*}
&\begin{cases}
d^{(\nu)} = 1  ,\quad \alpha_k = \sqrt{(N-k+1)_{k}}, \\
c^{(\nu)} = \nu ,\quad t_k^{(\nu)}  = \Gamma(\nu+1)\prod_{s=1}^{k-1}\left(1 +\frac{\nu}{s} \right) ,
\end{cases}
\\
&\begin{cases}
d^{(\nu)} = \lambda ,\qquad  \alpha_k = \sqrt{(k-1)!(N-k+1)_{k-1}}, \\
c^{(\nu)} = \lambda\nu ,\quad t_k^{(\nu)}  = \lambda^\nu \Gamma(\nu+k) ,
\end{cases}
\end{align*}
and for the last set we additionally require
$\rho>0$ and $C\geq 0$:
$$
\begin{cases}
d^{(\nu)} = \rho  ,\qquad\qquad \alpha_k = 1, \\
c^{(\nu)} = C+\nu\rho ,\quad t_k^{(\nu)}  = \frac{(\nu+1+C/\rho)_{k-1}}{(k-1)!(N-k+1)_{k-1}}\rho^\nu \Gamma(\nu+1+C/\rho) .
\end{cases}
$$
For these parameters we have
\begin{equation}\label{eq:philaguerre}
\Phi^{(\nu)}(x)= x^2 \phi_2^{(\nu)} + x\phi_1^{(\nu)},
\qquad
\Psi^{(\nu)}(x) = x\psi_1^{(\nu)} + \psi_0^{(\nu)}
\end{equation}
with
$
\phi_2^{(\nu)}
=
-d^{(\nu)}\left(L(0)^\ast \right)^{-1} A^\ast L(0)^\ast
$, 
$
\phi_1^{(\nu)}
=
d^{(\nu)}\left(L(0)^\ast \right)^{-1} J L(0)^\ast
+
c^{(\nu)}I,
$
and
\begin{align*}
\psi_1^{(\nu)}
&= 
d^{(\nu)}\left(L(0)^\ast \right)^{-1} (J-A^\ast(J+(\nu+1)I)) L(0)^\ast
-(d^{(\nu)}(N+1)+c^{(\nu)})I,
\\
\psi_0^{(\nu)} 
&= 
\left(L(0)^\ast \right)^{-1}\left( 
(J+(\nu+1)I)(d^{(\nu)}J+c^{(\nu)}I)
+
\Delta^{(\nu)-1} A \Delta^{(\nu+1)} \right)L(0)^\ast 
\end{align*}
with  $J = \mathrm{diag}(1,\dots, N)$ and $A_{k,k-1} = - \frac{\alpha_k}{\alpha_{k-1}}$.

\section{Gegenbauer--type}
%\label{app:gegenbauer}
The matrix weight introduced in \cite[Theorem 2.2]{KdlRR} is given in LDU
form by
$$
W^{(\nu)}(x) = L^{(\nu)}(x)T^{(\nu)}(x)L^{(\nu)}(x)^\ast,
$$
with $T^{(\nu)}(x)_{jj}=t_j^{(\nu)}(1-x^2)^{\nu+j-1/2}$ and
$L(x)_{j\geq k} = \beta_{j,k}^{(\nu)} C_{j-k}^{(\nu+k)}(x)$,
where $C_n^{(\nu)}(x)$ denotes the standard scalar Gegenbauer polynomials.
Note however that we have adhered to the index 
notation used in \cite{KdlRR}, which is different from all the
other cases described in this paper.
The matrix size is $N=2\ell+1$ with $\ell\in \tfrac12\mathbb{N}$
and the matrix index takes values $j\in \{0, \dots, 2\ell\}$.

We give the parameters that appear in the weight
$\beta_{j,k}^{(\nu)}=\frac{j!}{k!(2\nu+2k)_{j-k}}$ and 
$$
t_k^{(\nu)}
=
\frac{k!(\nu)_k}{(\nu+1/2)_k}
\frac{(2\nu+2\ell)_k(2\ell+\nu)}
{(2\ell-k+1)_k(2\nu+k-1)_k},
$$
as well as another parameter that will
allow for more compact expressions
$$
c^{(\nu)}
=
\frac{(2\nu+1)(2\ell++\nu+1)\ell^2}{\nu(2\nu+2\ell+1)(2\ell+\nu)(\ell+\nu)}.
$$
To the same end we give the following 
diagonal matrices
$$
J=\mathrm{diag}(0,\dots, 2\ell),
\quad
A_{j,j-1} = 1,
\quad
K_n^{(\nu)}
=
\frac{2\ell+2\nu+n}{-\ell^2}
(J+\nu I)((2\ell+\nu)I-J),
$$
the first two of which do not appear in \cite{KdlRR}.
We should also note that our $
\Phi^{(\nu)}(x) = x^2\phi_2^{(\nu)}+x\phi_1^{(\nu)}+\phi_0^{(\nu)}
$ and $
\Psi^{(\nu)}(x) = x\psi_1^{(\nu)}+\psi_0^{(\nu)}
$
follow a slightly different
convention than in \cite[Theorem 2.4]{KdlRR} due to a difference
in the form of the strong Pearson equations. 
We use the coefficients
\begin{equation}\label{eq:phigegenbauer}
\begin{aligned}
\phi_2^{(\nu)} &= \tfrac{c^{(\nu)}}{2\ell+2\nu+1}K_1^{(\nu)},
\qquad
\psi_1^{(\nu)} = c^{(\nu)} K_1^{(\nu)} \\
\phi_1^{(\nu)}
&=
\tfrac{c^{(\nu)}}{2\ell^2}\Bigl(
((2\ell+1)I-2J)(J-(2\ell+1)I)A
+
((2\ell-1)I-2J)A^\ast J
\Bigr) \\
\phi_0^{(\nu)}
&=
\tfrac{c^{(\nu)}}{4\ell^2}
\Bigl(
4(\ell+v)^2I
+
((2\ell+2)I-J)((2\ell+1)I-J)A^2
\\
&\hspace{5cm	}+
2J^2 -4\ell J+2\ell I
+ (A^\ast J)^2
\Bigr) \\
\psi_0^{(\nu)}
&=
c^{(\nu)}\tfrac{2\ell+1+2\nu}{-2\ell^2}
\left(
A(J-2\ell I)(J+\nu I )
-
A^\ast J ((2\ell+\nu)I-J)
\right)
\end{aligned}
\end{equation}
The only difference with \cite{KdlRR} is that here we have absorbed the
factor $c^{(\nu)}$ into the coefficients.

Lastly we note that the second term in $\psi_0^{(\nu)}$ has an errant opposite 
sign in \cite{KdlRR}.

\section{Charlier--type}
%\label{app:charlier}
The matrix weight introduced in \cite{EMR} is given in LDU
form by
\begin{equation*}
%\label{eq:charlierweight}
W^{(\nu)}(x)
=
(I+A)^{x+\nu}T^{(\nu)}(x)(I+A^\ast)^{x+\nu},
\qquad
 x,\nu\in \mathbb{N}_0,\quad a>0,
\end{equation*}
with the parameters $t_j^{(\nu)} >0$, $\alpha_j >0$ in
the diagonal matrices
$$
A_{j,k}
=
\begin{cases}
             \frac{\alpha_j }{\alpha_{j-1}}, &  j=k+1 \\
             0, &   j\neq k+1
\end{cases},
\qquad
T^{(\nu)}(x) = \frac{a^x}{x!} \textrm{diag}(t_1^{(\nu)}, \dots, t_N^{(\nu)}).
$$
Here the indices take values $j,k\in \{1,\dots , N\}$ again.

The following parameter values 
\begin{equation*}
\left(\frac{\alpha_j}{\alpha_{k}}\right)^2 
= 
a^{k-j} \frac{(N-k)!}{(N-j)!}
,
\qquad
t_k^{(\nu)}
= \left(\frac{a}{2}\right)^\nu (k)_\nu,
\end{equation*}
ensure that the weight satisfies the discrete strong 
Pearson equations. The matrix polynomials that arise from those
equations
\begin{align*}
\Phi^{(\nu)}(x)
&=
W^{(\nu)}(x)^{-1}W^{(\nu+1)}(x), 
\\
\Psi^{(\nu)}(x)
&=
W^{(\nu)}(x)^{-1}(W^{(\nu+1)}(x)-W^{(\nu+1)}(x-1)),
\end{align*}
are then of degree two and one respectively. 
In particular we have
\begin{equation}\label{eq:phicharlier}
\Phi^{(\nu)}(x) = x^2 \phi_2^{(\nu)} + x \phi_1^{(\nu)} +\phi_0^{(\nu)}, \qquad \Psi^{(\nu)}(x) = x \psi_1^{(\nu)} + \psi_0^{(\nu)},
\end{equation}
where $\phi_2^{(\nu)} =-\tfrac{1}{2}A^{*}(A^{*}+I)^{-1},$
\begin{align*}
\phi_1^{(\nu)} &=  \tfrac12 \left( 2J -(N+1)I -aA^*-(2\nu+1)A^\ast (A^*+I)^{-1} \right),\\
\phi_0^{(\nu)}  &= \psi_0^{(\nu)} = (A^{*}+I)^{-\nu} (T^{(\nu)}(0))^{-1}(A+I)T^{(\nu+1)}(0)(A^{*}+I)^{\nu+1},\\
\psi_1^{(\nu)} 
&= 
\tfrac{1}{2}(J-(N+1+\nu)I-aA^*-(\nu+1)A^*(I+A^*)^{-1}).
\end{align*}
\paragraph{Remark}
To see that this weight is in fact very similar to the
Hermite--, Laguerre-- and Gegenbauer--type weights we
have also seen in this paper, we can also define
\begin{equation*}
L(x)_{j,k}=  (-a)^{j-k}\frac{\alpha_j }{\alpha_k}
             \frac{c^{(a)}_{j-k}(x)}{(j-k)!}, \quad j\geq k, 
             \qquad \qquad    L(x)_{j,k} = 0, \quad   j<k.
\end{equation*} 
Since $L(x)$ satisfies $L(x+1)=L(x)(I+A)$, due to ladder relations
of the scalar Charlier polynomials, we have
$L(x)=L(0)(I+A)^x = (I+A)^x L(0)$ for  $x\in\mathbb{Z}$. So then it is
clear that the Charlier--type weight in this appendix
is congruent to a weight of the form
$L(x)T^{(\nu)}(x)L(x)^\ast = L(-\nu) W^{(\nu)}(x)L(-\nu)^\ast$ which looks
more similar to the other weights in this chapter.

\end{appendix}

% here we change the meaning of \VAN to use the prefix for the bibliography
\DeclareRobustCommand{\VAN}[3]{#3}
\DeclareRobustCommand{\DELA}[3]{#3}

% BibTex
%\bibliographystyle{siam}
%\bibliography{bibtest}

\begin{thebibliography}{0}
\providecommand{\natexlab}[1]{#1}
\providecommand{\url}[1]{\texttt{#1}}
\expandafter\ifx\csname urlstyle\endcsname\relax
  \providecommand{\doi}[1]{doi: #1}\else
  \providecommand{\doi}{doi: \begingroup \urlstyle{rm}\Url}\fi

\end{thebibliography}


\begin{thebibliography}{00}
\bibitem{Casperprol}
{\sc W.~R. Casper, F.~A. Gr{\"{u}}nbaum, M.~T. Yakimov, and I.~N.
  Zurri{\'{a}}n}, {\em Reflective prolate-spheroidal operators and the {KP/KdV}
  equations}, Proceedings of the National Academy of Sciences - PNAS, 116
  (2019), pp.~18310--18315.

\bibitem{Casper2020}
{\sc W.~R. Casper and M.~T. Yakimov}, {\em The matrix {B}ochner problem}, To
  appear in Am. J. Math.,  (2020).

\bibitem{furthercastro}
{\sc M.~M. Castro, F.~A. Gr{\"{u}}nbaum, I.~Pacharoni, and I.~N.
  Zurri{\'{a}}n}, {\em A further look at time-and-band limiting for matrix
  orthogonal polynomials}, in Frontiers In Orthogonal Polynomials and Q-series,
  M.~{Zuhair Nashed} and X.~Li, eds., vol.~1, World Scientific, 2018,
  pp.~138--153.

\bibitem{castrodarboux}
{\sc M.~M. Castro and F.~A. Grünbaum}, {\em The {D}arboux process and
  time-and-band limiting for matrix orthogonal polynomials}, Linear Algebra
  Appl., 487 (2015), pp.~328--341.

\bibitem{DER}
{\sc A.~Dea{\~n}o, B.~Eijsvoogel, and P.~Rom\'{a}n}, {\em Ladder relations for
  a class of matrix valued orthogonal polynomials}, Stud. Appl. Math., 146
  (2020), pp.~463--497.

\bibitem{DG86}
{\sc J.~J. Duistermaat and F.~A. Gr\"{u}nbaum}, {\em Differential equations in
  the spectral parameter}, Comm. Math. Phys., 103 (1986), pp.~177--240.

\bibitem{EMR}
{\sc B.~Eijsvoogel, L.~Morey, and P.~Rom\'{a}n}, {\em Duality and difference
  operators for matrix valued discrete polynomials on the nonnegative
  integers}.
\newblock Preprint \url{arXiv:2110.13019}, Oct. 2021.

\bibitem{Grunnew}
{\sc F.~A. Gr{\"{u}}nbaum}, {\em A new property of reproducing kernels for
  classical orthogonal polynomials}, J. Math. Anal. Appl., 95 (1983),
  pp.~491--500.

\bibitem{GI03}
{\sc F.~A. Gr\"{u}nbaum and P.~Iliev}, {\em A noncommutative version of the
  bispectral problem}, J. Comput. Appl. Math., 161 (2003), pp.~99--118.

\bibitem{GLP}
{\sc F.~A. Gr{\"{u}}nbaum, L.~Longhi, and M.~Perlstadt}, {\em Differential
  operators commuting with finite convolution integral operators: some
  non-{A}belian examples}, SIAM J. Appl. Math., 42 (1982), pp.~941--955.

\bibitem{GPZ15}
{\sc F.~A. Gr\"{u}nbaum, I.~Pacharoni, and I.~N. Zurri\'{a}n}, {\em Time and
  band limiting for matrix valued functions, an example}, Symmetry
  Integrability Geom. Methods Appl., 11 (2015), p.~14.

\bibitem{GPZint}
\leavevmode\vrule height 2pt depth -1.6pt width 23pt, {\em Time and band
  limiting for matrix valued functions: an integral and a commuting
  differential operator}, Inverse probl., 33 (2017), p.~25005.

\bibitem{GPZ}
\leavevmode\vrule height 2pt depth -1.6pt width 23pt, {\em Bispectrality and
  time-band limiting: matrix-valued polynomials}, Int. Math. Res. Not., 2020
  (2020), pp.~4016--4036.

\bibitem{GYprol}
{\sc F.~A. Gr{\"{u}}nbaum and M.~T. Yakimov}, {\em The prolate spheroidal
  phenomenon as a consequence of bispectrality}, in Workshop on
  Superintegrability, P.~Tempesta and P.~Winternitz, eds., vol.~37, 01 2004,
  pp.~301--312.

\bibitem{IKR2}
{\sc M.~E.~H. Ismail, E.~Koelink, and P.~Rom{\'a}n}, {\em {Matrix valued
  Hermite polynomials, Burchnall formulas and non-abelian Toda lattice}}, Adv.
  Appl. Math., 110 (2019), pp.~235--269.

\bibitem{KdlRR}
{\sc E.~Koelink, A.~M. de~los R\'{\i}os, and P.~Rom{\'a}n}, {\em Matrix-valued
  {G}egenbauer-type polynomials}, Constr. Approx., 46 (2017), pp.~459--487.

\bibitem{KR}
{\sc E.~Koelink and P.~Rom{\'a}n}, {\em Matrix valued {L}aguerre polynomials},
  in Positivity and Noncommutative Analysis: Festschrift in Honour of Ben de
  Pagter on the Occasion of his 65th Birthday, G.~Buskes, M.~de~Jeu, P.~Dodds,
  A.~Schep, F.~Sukochev, J.~van Neerven, and A.~Wickstead, eds., Cham: Springer
  International Publishing, 2019, pp.~295--320.

\bibitem{Pswf2}
{\sc H.~J. Landau and H.~O. Pollak}, {\em Prolate spheroidal wave functions,
  {F}ourier analysis and uncertainty - {II}}, Bell System Technical Journal, 40
  (1961), pp.~65--84.

\bibitem{Pswf3}
\leavevmode\vrule height 2pt depth -1.6pt width 23pt, {\em Prolate spheroidal
  wave functions, {F}ourier analysis and uncertainty - {III}: The dimension of
  the space of essentially time- and band-limited signals}, Bell System
  Technical Journal, 41 (1962), pp.~1295--1336.

\bibitem{perline}
{\sc R.~K. Perline}, {\em Discrete time-band limiting operators and commuting
  tridiagonal matrices}, SIAM J. Algebr. Discrete Meth., 8 (1987),
  pp.~192--195.

\bibitem{perlchop}
{\sc M.~A. {P}erlstadt}, {\em Chopped orthogonal polynomial expansions—some
  discrete cases}, SIAM J. Algebr. Discrete Meth., 4 (1983), pp.~94--100.

\bibitem{perl}
\leavevmode\vrule height 2pt depth -1.6pt width 23pt, {\em A property of
  orthogonal polynomial families with polynomial duals}, SIAM J. Math. Anal.,
  15 (1984), pp.~1043--1054.

\bibitem{perleigen}
\leavevmode\vrule height 2pt depth -1.6pt width 23pt, {\em Oscillation
  properties for some polynomial analogues of the prolate spheroidal wave
  functions}, SIAM J. Math. Anal., 19 (1988), pp.~751--761.

\bibitem{Pswf4}
{\sc D.~Slepian}, {\em Prolate spheroidal wave functions, {F}ourier analysis
  and uncertainty - {IV}: {E}xtensions to many dimensions; generalized prolate
  spheroidal functions}, Bell System Technical Journal, 43 (1964),
  pp.~3009--3057.

\bibitem{slepianhelp}
\leavevmode\vrule height 2pt depth -1.6pt width 23pt, {\em Some comments on
  {F}ourier analysis, uncertainty and modeling}, SIAM Review, 25 (1983),
  pp.~379--393.

\bibitem{Pswf1}
{\sc D.~Slepian and H.~O. Pollak}, {\em Prolate spheroidal wave functions,
  {F}ourier analysis and uncertainty - {I}}, Bell System Technical Journal, 40
  (1961), pp.~43--63.
\end{thebibliography}
\bibliographystyle{plainnat}

\end{document}